\newtheorem{theorem}{Theorem}[section]
\newtheorem{lemma}[theorem]{Lemma}
\newtheorem{proposition}[theorem]{Proposition}
\newtheorem{corollary}[theorem]{Corollary}
\def\<{\langle}
\def\>{\rangle}
\def\o{\otimes}
\def\r{\rho}
\date{}
\begin{document}
\renewcommand{\baselinestretch}{1.2}
\renewcommand{\arraystretch}{1.0}
\title{\bf Yetter-Drinfeld-Long bimodules are modules}
 \date{}
\author {{\bf Daowei Lu\footnote {Corresponding author:  ludaowei620@126.com} \quad  Shuanhong Wang}\\
{\small Department of Mathematics, Southeast University}\\
{\small Nanjing, Jiangsu 210096, P. R. of China}}
 \maketitle
\begin{center}
\begin{minipage}{12.cm}

\noindent{\bf Abstract.} Let $H$ be a finite dimensional bialgebra. In this paper, we prove that the category of Yetter-Drinfeld-Long bimodules is isomorphic to the Yetter-Drinfeld category over the tensor product bialgebra $H\o H^*$ as monoidal category. Moreover if $H$ is a Hopf algebra with bijective antipode, the isomorphism is braided.
\\

\noindent{\bf Keywords:} Hopf algebra; Yetter-Drinfeld-Long bimodule; Braided monoidal category.
\\

 \noindent{\bf  Mathematics Subject Classification:} 16W30.
 \end{minipage}
 \end{center}
 \normalsize\vskip1cm

\section*{Introduction}

F. Panaite and F. V. Oystaeyen in \cite{PO} introduced the notion of $L$-$R$ smash biproduct, with the $L$-$R$ smash product and $L$-$R$ smash coproduct introduced in \cite{PV} as multiplication, respectively
comultiplication. When an object $A$ which is both an algebra and a coalgebra and a bialgebra $H$ form a $L$-$R$-admissible pair $(H,A)$, $A\natural H$ becomes a bialgebra with smash product and smash coproduct, and the Radford biproduct is a special case. It turns out that $A$ is in fact a bialgebra in the category $\mathcal{LR}(H)$ of Yetter-Drinfeld-Long bimodules (introduced in \cite{PO}) with some compatible condition.

The aim of this paper is to show that the category $\mathcal{LR}(H)$ coincides with the Yetter-Drinfeld category over the bialgebra $H\o H^*$, in the case when $H$ is finite dimensional. Hence any object $M\in\mathcal{LR}(H)$ is just a module over the Drinfeld double $D(H\o H^*)$.

The paper is organized as follows. In section 1, we recall the category $\mathcal{LR}(H)$. In section 2, we give the main result of this paper.

Throughout this article, all the vector spaces, tensor product and homomorphisms are over a fixed field $k$.  For a coalgebra $C$, we will use the Heyneman-Sweedler's notation $\Delta(c)=  c_{1}\otimes c_{2},$
for any $c\in C$ (summation omitted).

\section{Preliminaries}
\def\theequation{1.\arabic{equation}}
\setcounter{equation} {0}

Let $H$ be a bialgebra. The category $\mathcal{LR}(H)$ is defined as follows. The objects of $\mathcal{LR}(H)$ are vector spaces $M$ endowed with $H$-bimodule and $H$-bicomodule
structures (denoted by $h\o m \mapsto h\cdot m, m\o h \mapsto m\cdot h, m\mapsto m_{(-1)}\o m_{(0)},
m\mapsto m_{<0>}\o m_{<1>}$, for all $h\in H$, $m\in M$), such that $M$ is a left-left Yetter-Drinfeld module, a
left-right Long module, a right-right Yetter-Drinfeld module and a right-left Long module, i.e.
\begin{eqnarray}
&&(h_1\cdot m)_{(-1)}h_2\o(h_1\cdot m)_{(0)}=h_1m_{(-1)}\o h_2\cdot m_{(0)},\label{1a}\\
&&(h\cdot m)_{<0>}\o(h\cdot m)_{<1>}=h\cdot m_{<0>}\o m_{<1>},\label{1b}\\
&&(m\cdot h_2)_{<0>}\o h_1(m\cdot h_2)_{<1>}=m_{<0>}\cdot h_1\o m_{<1>} h_2,\label{1c}\\
&&(m \cdot h)_{(-1)}\o(m \cdot h)_{(0)}=m_{(-1)}\o m_{(0)}\cdot h.\label{1d}
\end{eqnarray}
The morphisms in $\mathcal{LR}(H)$ are $H$-bilinear and $H$-bicolinear maps.

If $H$ has a bijective antipode $S$, $\mathcal{LR}(H)$ becomes a strict braided monoidal category with the following structures: for all $M,N\in\mathcal{LR}(H)$, and $m\in M,n\in N,h\in H$,
\begin{eqnarray*}
&&h\cdot(m\o n)=h_1\cdot m\o h_2\cdot n,\quad (m\o n)_{(-1)}\o(m\o n)_{(0)}=m_{(-1)}n_{(-1)}\o m_{(0)}\o n_{(0)},\\
&&(m\o n)\cdot h=m\cdot h_1\o n\cdot h_2,\quad (m\o n)_{<0>}\o(m\o n)_{<1>}=m_{<0>}\o n_{<0>}\o m_{<1>}n_{<1>},
\end{eqnarray*}
and the braiding
$$c_{_{M,N}}:M\o N\mapsto N\o M,\quad m\o n\mapsto m_{(-1)}\cdot n_{<0>}\o m_{(0)}\cdot n_{<1>},$$
and the inverse
$$c^{-1}_{_{M,N}}:N\o M\mapsto M\o N,\quad n\o m\mapsto m_{(0)}\cdot S^{-1}(n_{<1>})\o S^{-1}(m_{(-1)})\cdot n_{<0>}.$$

\section{Main result}
\def\theequation{2.\arabic{equation}}
\setcounter{equation} {0} \hskip\parindent

In this section, we will give the main result of this paper.

\begin{lemma}
Let $H$ be a finite dimensional bialgebra. Then we have a functor $F:\mathcal{LR}(H)\longrightarrow\! ^{H\o H^*}_{H\o H^*}\mathcal{YD}$ given for any object $M\in\mathcal{LR}(H)$ and any morphism $\vartheta$ by
$$F(M)=M\quad\hbox{and}\quad F(\vartheta)=\vartheta,$$
where $H\o H^*$ is a bialgebra with tensor product and tensor coproduct.
\end{lemma}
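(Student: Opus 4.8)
The plan is to equip each object $M\in\mathcal{LR}(H)$ with a left module and a left comodule structure over $K:=H\o H^*$ built out of the four given structures, to observe that the identities \eqref{1b} and \eqref{1d} are precisely what make these well defined, and then to check the Yetter--Drinfeld compatibility over $K$, into which \eqref{1a} and \eqref{1c} will enter. It is convenient to first recall the duality that makes this work (and is where finite-dimensionality of $H$ is used): fixing dual bases $\{e_i\}$ of $H$ and $\{e^i\}$ of $H^*$, a right $H$-comodule structure on $M$ amounts to a left $H^*$-module structure $p\rightharpoonup m:=\<p,m_{<1>}\>\,m_{<0>}$, and a right $H$-module structure on $M$ amounts to a left $H^*$-comodule structure $m\mapsto\s_i e^i\o(m\c e_i)$.

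I would then define the left $K$-action by $(h\o p)\c m:=\<p,m_{<1>}\>\,h\c m_{<0>}$, so that the first tensor factor acts via the given left $H$-module and the second via $\rightharpoonup$. That this is a module over the tensor-product \emph{algebra} $H\o H^*$ is equivalent to the commutation of the $H$- and $H^*$-actions, which is exactly \eqref{1b} (associativity of the $H^*$-part being coassociativity of the right coaction). Dually, I would define the left $K$-coaction by $m\mapsto\s_i(m_{(-1)}\o e^i)\o(m_{(0)}\c e_i)$, combining the given left $H$-coaction with the left $H^*$-coaction above; that the $H$- and $H^*$-coactions cocommute, so that this is a comodule over the tensor-product \emph{coalgebra} $H\o H^*$, is exactly \eqref{1d}.

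Next I would verify the Yetter--Drinfeld axiom $(k_1\c m)_{[-1]}k_2\o(k_1\c m)_{[0]}=k_1 m_{[-1]}\o k_2\c m_{[0]}$ for $k\in K$, where $m\mapsto m_{[-1]}\o m_{[0]}$ now denotes the $K$-coaction. A routine check shows that the set of $k$ for which this holds (for all $m$) is a unital subalgebra of $K$; since $K=H\o H^*$ is generated as an algebra by the elements $h\o\v$ and $1_H\o p$, it suffices to treat those two families. For $k=h\o\v$ the $H^*$-action trivialises, and after cancelling the injective left $H^*$-coaction that appears on both sides the identity reduces precisely to the left-left Yetter--Drinfeld condition \eqref{1a}. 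For $k=1_H\o p$ the left $H$-coaction factors out of both sides (one uses the $H$-bicomodule axiom to move it past $\rightharpoonup$), and pairing the $H^*$-slot against an arbitrary $h\in H$ and simplifying with $\s_i\<e^i,h\>e_i=h$ and the convolution product of $H^*$, the identity reduces precisely to \eqref{1c}. Finally, any morphism $\vartheta$ of $\mathcal{LR}(H)$, being $H$-bilinear and $H$-bicolinear, is automatically $K$-linear and $K$-colinear for the structures just defined, and $F$ is visibly a functor since it changes neither underlying spaces nor underlying maps.

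I expect the only genuine obstacle to be organisational. One must keep careful track of which of the two module and two comodule structures induces which left $H^*$-structure under the identification $H^{**}\cong H$, and of the order of the tensor factors in $H\o H^*$, so that the ``pure $H^*$'' instance of the $K$-Yetter--Drinfeld axiom emerges as \eqref{1c} on the nose, rather than as a variant phrased over an opposite or co-opposite version of $H^*$. A reassuring consistency check is that each of the four defining identities is consumed exactly once: \eqref{1b} and \eqref{1d} for well-definedness of the $K$-action and $K$-coaction, and \eqref{1a} and \eqref{1c} for the Yetter--Drinfeld compatibility.
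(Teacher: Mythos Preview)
Your approach is correct and coincides with the paper's: you define exactly the same $K$-action $(h\o p)\c m=\<p,m_{<1>}\>h\c m_{<0>}$ and $K$-coaction $m\mapsto\s_i(m_{(-1)}\o e^i)\o(m_{(0)}\c e_i)$, and you verify the module and comodule axioms via \eqref{1b} and \eqref{1d} just as the paper does. The only organisational difference is in the Yetter--Drinfeld check. The paper computes both sides directly for a general element $h\o f\in K$, evaluates the $H^*$-slot against an arbitrary $g\in H$, and then uses \eqref{1a} and \eqref{1c} in the same calculation; you instead observe that the set of $k$ satisfying the compatibility is a unital subalgebra and reduce to the generators $h\o\v$ and $1_H\o p$, so that \eqref{1a} and \eqref{1c} appear in separate, cleaner checks. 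Both are valid; the paper's single computation is marginally shorter, while your version makes the role of each axiom more transparent. One small omission: in your $h\o\v$ step you also need the $H$-bimodule axiom to commute the right action by $e_i$ past the left action by $h_2$ before you can ``cancel'' the $H^*$-coaction, just as you explicitly invoke the $H$-bicomodule axiom in the $1_H\o p$ step.
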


\begin{proof}
For all $M\in\mathcal{LR}(H)$, first of all, define the left action of $H\o H^*$ on $M$ by
\begin{equation}
(h\o f)\cdot m=\langle f,m_{<1>}\rangle h\cdot m_{<0>},
\end{equation}
for all $h\in H,f\in H^*$ and $m\in M$. Then $M$ is a left $H\o H^*$-module. Indeed for all $h,h'\in H,f,f'\in H^*$ and $m\in M$,
\begin{align*}
(h\o f)(h'\o f')\cdot m&=(hh'\o ff')\cdot m\\
                      &=\langle ff',m_{<1>}\rangle hh'\cdot m_{<0>}\\
                      &=\langle f,m_{<1>1}\rangle\langle f',m_{<1>2}\rangle h\cdot(h'\cdot m_{<0>})\\
                      &=\langle f,m_{<0><1>}\rangle\langle f',m_{<1>}\rangle h\cdot(h'\cdot m_{<0><0>})\\
                      &\stackrel{(\ref{1b})}{=}\langle f,(h'\cdot m_{<0>})_{<1>}\rangle\langle f',m_{<1>}\rangle h\cdot(h'\cdot m_{<0>})_{<0>}\\
                      &=\langle f',m_{<1>}\rangle (h\o f)\cdot(h'\cdot m_{<0>})\\
                      &=(h\o f)\cdot((h'\o f')\cdot m).
\end{align*}
And
\begin{align*}
(1\o\varepsilon)\cdot m=\langle \varepsilon,m_{<1>}\rangle  m_{<0>}=m,
\end{align*}
as claimed. Next for all $m\in M$, define the left coaction of $H\o H^*$ on $M$ by
\begin{equation}
\r(m)=m_{[-1]}\o m_{[0]}=\sum m_{(-1)}\o h^i\o m_{(0)}\cdot h_i,
\end{equation}
where $\{h_i\}_i$ and $\{h^i\}_i$ are dual bases in $H$ and $H^*$. Then on one hand,
\begin{align*}
(\Delta_{H\o H^*}\o id)\r(m)=\sum m_{(-1)1}\o h^i_1\o m_{(-1)2}\o h^i_2\o m_{(0)}\cdot h_i.
\end{align*}
Evaluating the right side of the equation on $id\o g\o id\o h\o id$, we obtain
$$m_{(-1)1}\o m_{(-1)2}\o m_{(0)}\cdot gh.$$
On the other hand
\begin{align*}
(id\o\r)\r(m)&=\sum m_{(-1)}\o h^i\o(m_{(0)}\cdot h_i)_{(-1)}\o h^j\o (m_{(0)}\cdot h_i)_{(0)}\cdot h_j\\
             &\stackrel{(\ref{1d})}{=}\sum m_{(-1)}\o h^i\o m_{(0)(-1)}\o h^j\o (m_{(0)(0)}\cdot h_i)\cdot h_j\\
             &=\sum m_{(-1)1}\o h^i\o m_{(-1)2}\o h^j\o m_{(0)}\cdot h_ih_j.
\end{align*}
Evaluating the right side of the equation on $id\o g\o id\o h\o id$, we obtain
$$m_{(-1)1}\o m_{(-1)2}\o m_{(0)}\cdot gh.$$
Since $g,h\in H$ were arbitrary, we have
$$(\Delta_{H\o H^*}\o id)\r=(id\o\r)\r.$$
And since
$$(\varepsilon_{H\o H^*}\o id)(\r(m))=\varepsilon(m_{(-1)})m_{(0)}=m,$$
$M$ is a left $H\o H^*$-comodule.

Finally
\begin{align*}
&[(h\o f)_1\cdot m]_{[-1]}(h\o f)_2\o[(h\o f)_1\cdot m]_{[0]}\\
&=(h_1\cdot m_{<0>})_{[-1]}\langle f_1,m_{<1>}\rangle(h_2\o f_2)\o(h_1\cdot m_{<0>})_{[0]}\\
&=\sum\langle f_1,m_{<1>}\rangle((h_1\cdot m_{<0>})_{(-1)}h_2\o h^i f_2)\o(h_1\cdot m_{<0>})_{(0)}\cdot h_i\\
&\stackrel{(\ref{1a})}{=}\sum\langle f_1,m_{<1>}\rangle h_1m_{<0>(-1)}\o h^i f_2\o h_2\cdot m_{<0>(0)}\cdot h_i.
\end{align*}
Evaluating the right side of the equation on $id\o g\o id$, we obtain
$$\langle f,m_{<1>}g_2\rangle h_1m_{<0>(-1)}\o h_2\cdot m_{<0>(0)}\cdot g_1.$$
And
\begin{align*}
&(h\o f)_1m_{[-1]}\o(h\o f)_2\cdot m_{[0]}\\
&=\sum(h_1\o f_1)(m_{(-1)}\o h^i)\o(h_2\o f_2)\cdot(m_{(0)}\cdot h_i)\\
&=\sum h_1m_{(-1)}\o f_1h^i\o \langle f_2,(m_{(0)}\cdot h_i)_{<1>}\rangle h_2\cdot(m_{(0)}\cdot h_i)_{<0>}.
\end{align*}
Evaluating the right side of the equation on $id\o g\o id$, we obtain
\begin{align*}
&h_1m_{(-1)}\o \langle f,g_1(m_{(0)}\cdot g_2)_{<1>}\rangle h_2\cdot (m_{(0)}\cdot g_2)_{<0>}\\
&\stackrel{(\ref{1c})}{=}h_1m_{(-1)}\o \langle f,m_{(0)<1>}g_2\rangle h_2\cdot m_{(0)<0>}\cdot g_1\\
&=\langle f,m_{<1>}g_2\rangle h_1m_{<0>(-1)}\o h_2\cdot m_{<0>(0)}\cdot g_1.
\end{align*}
Therefore $M$ is a left-left Yetter-Drinfeld module over $H\o H^*$. It is straightforward to verify that any morphism in $\mathcal{LR}(H)$ is also a morphism in $^{H\o H^*}_{H\o H^*}\mathcal{YD}$. The proof is completed.
\end{proof}

\begin{lemma}
Let $H$ be a finite dimensional bialgebra. Then we have a functor $G:\ ^{H\o H^*}_{H\o H^*}\mathcal{YD}\longrightarrow \mathcal{LR}(H)$ given for any object $M\in\! ^{H\o H^*}_{H\o H^*}\mathcal{YD}$ and any morphism $\theta$ by
$$G(M)=M\quad\hbox{and}\quad G(\theta)=\theta.$$
\end{lemma}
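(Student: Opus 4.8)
The plan is to equip $M$ with an $\mathcal{LR}(H)$-structure by inverting the formulas used in the preceding lemma, and then to recover the defining identities of $\mathcal{LR}(H)$ by running the computations of that proof backwards. Write the given $H\o H^*$-action as $(h\o f)\cdot m$ and the given $H\o H^*$-coaction as $m\mapsto m_{[-1]}\o m_{[0]}$ with $m_{[-1]}\in H\o H^*$; when I need to separate the two tensor factors of $m_{[-1]}$ I write $m_{[-1]}=\sum m_{[-1]}^{(1)}\o m_{[-1]}^{(2)}$, with $m_{[-1]}^{(1)}\in H$ and $m_{[-1]}^{(2)}\in H^*$. Since $H\o H^*$ is the tensor-product bialgebra, $h\mapsto h\o\v$ and $f\mapsto 1\o f$ are bialgebra maps into $H\o H^*$, and $\mathrm{id}\o\v$, $\v\o\mathrm{id}$ are bialgebra maps out of it. Using these, together with the finite dimensionality of $H$ (which lets me identify left $H^*$-modules with right $H$-comodules and left $H^*$-comodules with right $H$-modules), I set
$$h\cdot m:=(h\o\v)\cdot m,\qquad m_{<0>}\o m_{<1>}:=\sum_i(1\o h^i)\cdot m\o h_i,$$
$$m_{(-1)}\o m_{(0)}:=(\mathrm{id}\o\v\o\mathrm{id})\big(m_{[-1]}\o m_{[0]}\big),\qquad m\cdot h:=\sum\v\big(m_{[-1]}^{(1)}\big)\big\langle m_{[-1]}^{(2)},h\big\rangle\, m_{[0]},$$
where $\{h_i\},\{h^i\}$ are dual bases of $H$ and $H^*$.

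First I would check that these four data make $M$ an $H$-bimodule and an $H$-bicomodule. The left $H$-action and the left $H$-coaction are genuine because they are a restriction, resp.\ a corestriction, along a bialgebra map; the right $H$-action and the right $H$-comodule are genuine because finite-dimensional duality converts the underlying left $H^*$-comodule, resp.\ left $H^*$-module, into a right $H$-module, resp.\ right $H$-comodule, and this is the first essential use of $\dim H<\infty$. Commutativity of the two $H$-actions and compatibility of the two $H$-coactions then follow because the subalgebras $H\o\v$ and $1\o H^*$ of $H\o H^*$ commute and $H\o H^*$ is a tensor-product coalgebra; these are precisely the ``$M$ is a module'' and ``$M$ is a comodule'' computations in the proof of the preceding lemma, read in reverse.

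Next I would derive the four compatibility conditions. Conditions (\ref{1b}) and (\ref{1d}) need no Yetter--Drinfeld input: (\ref{1b}) follows at once from the fact that $(h\o\v)$ and $(1\o h^i)$ commute in $H\o H^*$, and (\ref{1d}) follows from coassociativity of the coaction together with the counit axioms and the duality dictionary. For (\ref{1a}) I would take the Yetter--Drinfeld identity over $H\o H^*$,
$$\big[(h\o f)_1\cdot m\big]_{[-1]}(h\o f)_2\o\big[(h\o f)_1\cdot m\big]_{[0]}=(h\o f)_1m_{[-1]}\o(h\o f)_2\cdot m_{[0]},$$
put $f=\v$, and apply $\mathrm{id}\o\v\o\mathrm{id}$; since $\D_{H^*}(\v)=\v\o\v$ and $(h\o\v)(h'\o\v)=hh'\o\v$, the identity collapses exactly to (\ref{1a}). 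For (\ref{1c}) I would instead put $h=1$ in the same identity and apply $\v\o\mathrm{id}\o\mathrm{id}$; this yields the left-left Yetter--Drinfeld condition over $H^*$ for the left $H^*$-module and left $H^*$-comodule underlying $M$, and under the duality dictionary this is exactly the right-right Yetter--Drinfeld condition (\ref{1c}) for $m\cdot h$ and $m_{<0>}\o m_{<1>}$. Finally $G(\th)=\th$ is $H$-bilinear and $H$-bicolinear whenever $\th$ is $H\o H^*$-linear and colinear, because all four $H$-structures are produced from the $H\o H^*$-structures by the fixed natural operations above.

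The step I expect to be the real obstacle is (\ref{1c}): unlike (\ref{1a}) it is not a bare specialization of the Yetter--Drinfeld identity but requires combining that identity with the translation between left $H^*$-(co)modules and right $H$-(co)modules, so the dual-basis bookkeeping has to be carried out with care. Together with verifying that the duality-transported right $H$-action and right $H$-comodule satisfy the (co)module axioms, this is where the argument genuinely relies on $H$ being finite dimensional; everything else is the reversal of computations already performed in the proof of the preceding lemma.
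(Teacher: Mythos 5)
Your choice of the four induced structures on $M$ is exactly the paper's, and your treatment of (\ref{1a}), (\ref{1b}), (\ref{1c}), (\ref{1d}) matches the paper's strategy: (\ref{1b}) from the commutativity of $H\o\varepsilon$ and $1\o H^*$, (\ref{1d}) from coassociativity of the $H\o H^*$-coaction, and (\ref{1a}), (\ref{1c}) by specializing the Yetter--Drinfeld identity over $H\o H^*$ at $(h\o\varepsilon)$ resp.\ $(1\o f)$ and projecting onto the appropriate tensor leg. However, there is a genuine error in how you justify that $M$ is an $H$-\emph{bimodule} and an $H$-\emph{bicomodule}. You claim that the commutativity $(h\cdot m)\cdot h'=h\cdot(m\cdot h')$ and the compatibility of the two coactions ``follow because the subalgebras $H\o\varepsilon$ and $1\o H^*$ commute and $H\o H^*$ is a tensor-product coalgebra,'' being the module/comodule computations of the preceding lemma read in reverse. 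This is a non sequitur: the right $H$-action is \emph{not} the action of any subalgebra of $H\o H^*$ --- it is extracted from the $H^*$-leg of the \emph{coaction} --- and dually the right $H$-coaction comes from the action of $1\o H^*$ while the left coaction comes from the coaction. So in each of these two compatibilities one structure comes from the $H\o H^*$-action and the other from the $H\o H^*$-coaction, and a bare module-plus-comodule structure gives no relation between them; what the module/comodule axioms of the preceding lemma yield when read backwards is precisely (\ref{1b}) and (\ref{1d}) (which you derive separately), not the bimodule/bicomodule axioms.

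The fix is the same mechanism you already use for (\ref{1a}) and (\ref{1c}), and it is what the paper does: the Yetter--Drinfeld identity
$$\bigl[(h\o f)_1\cdot m\bigr]_{[-1]}(h\o f)_2\o\bigl[(h\o f)_1\cdot m\bigr]_{[0]}=(h\o f)_1m_{[-1]}\o(h\o f)_2\cdot m_{[0]}$$
has four relevant specializations: with $f=\varepsilon$ and $id\o\varepsilon_{H^*}$ on the first leg it gives (\ref{1a}); with $f=\varepsilon$ and $\varepsilon\o id$ on the first leg (paired against $h'$) it gives the bimodule axiom $(h\cdot m)\cdot h'=h\cdot(m\cdot h')$; with $h=1$ and $\varepsilon\o id$ it gives (\ref{1c}); and with $h=1$ and $id\o\varepsilon_{H^*}$ it gives the bicomodule compatibility $(\r_L\o id)\r_R=(id\o\r_R)\r_L$. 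As written, your argument omits the Yetter--Drinfeld input in exactly the two places where the paper invokes it for the bimodule and bicomodule axioms, so those two verifications would fail under the justification you give, even though the remainder of your proposal is sound and essentially identical to the paper's proof.
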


\begin{proof}
We denote by $\varepsilon^*$ the map $\varepsilon_{H^*}$ defined by $\varepsilon_{H^*}(f)=f(1)$ for all $f\in H^*$.
For any $M\in\! ^{H\o H^*}_{H\o H^*}\mathcal{YD}$, denote the left $H\o H^*$-coaction on $M$ by
$$m\mapsto m_{[-1]}\o m_{[0]},$$
for all $m\in M.$ Define the $H$-bimodule and $H$-bicomodule structures as follows:
\begin{align}
&h\cdot m=(h\o\varepsilon)\cdot m,\quad \r_L(m)=m_{(-1)}\o m_{(0)}=(id\o\varepsilon^*)(m_{[-1]})\o m_{[0]},\\
&m\cdot h=\langle (\varepsilon\o id)m_{[-1]},h\rangle m_{[0]},\quad \r_R(m)=m_{<0>}\o m_{<1>}=\sum(1\o h^i)\cdot m\o h_i.
\end{align}
for all $h\in H$.

Obviously $M$ is a left $H$-module. And
\begin{align*}
(\Delta\o id)\r_L(m)&=\Delta((id\o\varepsilon^*)(m_{[-1]}))\o m_{[0]}\\
                    &=(id\o\varepsilon^*)(m_{[-1]1})(id\o\varepsilon^*)(m_{[-1]2})\o m_{[0]}\\
                    &=(id\o\varepsilon^*)(m_{[-1]})(id\o\varepsilon^*)(m_{[0][-1]})\o m_{[0][0]}\\
                    &=(id\o\r_L)\r_L(m).
\end{align*}
The counit is straightforward. Thus $M$ is a left $H$-comodule. For all $h,h'\in M$,
\begin{align*}
m\cdot hh'&=\langle (\varepsilon\o id)m_{[-1]},hh'\rangle m_{[0]}\\
          &=\langle (\varepsilon\o id)m_{[-1]1},h\rangle\langle (\varepsilon\o id)m_{[-1]2},h'\rangle m_{[0]}\\
          &=\langle (\varepsilon\o id)m_{[-1]},h\rangle\langle (\varepsilon\o id)m_{[0][-1]},h'\rangle m_{[0][0]}\\
          &=\langle (\varepsilon\o id)m_{[-1]},h\rangle m\cdot h'\\
          &=(m\cdot h)\cdot h'.
\end{align*}
The unit is obvious. Thus $M$ is a right $H$-module. Since
\begin{align*}
(id\o\Delta)\r_R(m)&=\sum(1\o h^i)\cdot m\o h_{i1}\o h_{i2}\\
                   &=\sum(1\o h^ih^j)\cdot m\o h^j\o h^i\\
                   &=(\r_R\o id)\r_R(m),
\end{align*}
it
follows that M is a right $H$-comodule.
Moreover
\begin{align*}
(h\cdot m)\cdot h'&=((h\o\varepsilon)\cdot m)\cdot h'\\
                  &=\langle(\varepsilon\o id)((h\o\varepsilon)\cdot m)_{[-1]},h'\rangle ((h\o\varepsilon)\cdot m)_{[0]}\\
                  &=\langle(\varepsilon\o id)[((h_1\o\varepsilon)\cdot m)_{[-1]}(h_2\o\varepsilon)],h'\rangle ((h_1\o\varepsilon)\cdot m)_{[0]}\\
                  &\stackrel{(\ref{1a})}{=}\langle(\varepsilon\o id)((h_1\o\varepsilon) m_{[-1]}),h'\rangle (h_2\o\varepsilon)\cdot m_{[0]}\\
                  &=\langle(\varepsilon\o id)m_{[-1]},h'\rangle(h\o\varepsilon)\cdot m_{[0]}\\
                  &=h\cdot(m\cdot h').
\end{align*}
Thus $M$ is an $H$-bimodule. And
\begin{align*}
(\r_L\o id)\r_R(m)&=\sum(id\o\varepsilon^*)((1\o h^i)\cdot m)_{[-1]}\o((1\o h^i)\cdot m)_{[0]}\o h_i\\
                  &=\sum(id\o\varepsilon^*)[((1\o h^i_1)\cdot m)_{[-1]}(1\o h^i_2)]\o((1\o h^i_1)\cdot m)_{[0]}\o h_i\\
                  &\stackrel{(\ref{1a})}{=}\sum(id\o\varepsilon^*)((1\o h^i_1)m_{[-1]})\o(1\o h^i_2)\cdot m_{[0]}\o h_i\\
                  &=(id\o\r_R)\r_L(m).
\end{align*}
Thus $M$ is an $H$-bicomodule.

We now prove (\ref{1a}). For all $h\in H, m\in M$,
\begin{align*}
&(h_1\cdot m)_{(-1)}h_2\o(h_1\cdot m)_{(0)}\\
&=((h_1\o\varepsilon)\cdot m)_{(-1)}h_2\o((h_1\o\varepsilon)\cdot m)_{(0)}\\
&=(id\o\varepsilon^*)(((h_1\o\varepsilon)\cdot m)_{[-1]}(h_2\o\varepsilon))\o((h_1\o\varepsilon)\cdot m)_{[0]}\\
&\stackrel{(\ref{1a})}{=}(id\o\varepsilon^*)((h_1\o\varepsilon) m_{[-1]})\o(h_2\o\varepsilon)\cdot m_{[0]}\\
&=h_1m_{(-1)}\o h_2\cdot m_{(0)}.
\end{align*}
We now prove (\ref{1b}):
\begin{align*}
(h\cdot m)_{<0>}\o(h\cdot m)_{<1>}&=((h\o\varepsilon)\cdot m)_{<0>}\o((h\o\varepsilon)\cdot m)_{<1>}\\
                                  &=\sum(1\o h^i)(h\o\varepsilon)\cdot m\o h_i\\
                                  &=\sum(h\o\varepsilon)(1\o h^i)\cdot m\o h_i\\
                                  &=h\cdot m_{<0>}\o m_{<1>}.
\end{align*}
We now prove (\ref{1c}): On one hand,
\begin{align*}
(m\cdot h_2)_{<0>}\o h_1(m\cdot h_2)_{<1>}&=\langle(\varepsilon\o id)m_{[-1]},h_2\rangle m_{[0]<0>}\o h_1 m_{[0]<1>}\\
                                          &=\sum\langle(\varepsilon\o id)m_{[-1]},h_2\rangle(1\o h^{i})\cdot m_{[0]}\o h_{1}h_{i}.
\end{align*}
Evaluating the right side on $id \o f$ for all $f\in H^*$, we have
\begin{align*}
 &\langle(\varepsilon\o id)m_{[-1]},h_2\rangle (1\o f_2)\cdot m_{[0]}f_1(h_1)\\
 &=\langle(\varepsilon\o id)(1\o f_1)m_{[-1]},h\rangle (1\o f_2)\cdot m_{[0]}\\
 &\stackrel{(\ref{1a})}{=}\langle(\varepsilon\o id)(((1\o f_1)\cdot m)_{[-1]}(1\o f_2)),h\rangle ((1\o f_1)\cdot m)_{[0]}.
\end{align*}
On the other hand,
\begin{align*}
m_{<0>}\cdot h_1\o m_{<1>}h_2&=\sum((1\o h^{i})\cdot m)\cdot h_1\o h_ih_2\\
&=\sum\langle(\varepsilon\o id)((1\o h^i)\cdot m)_{[-1]},h_1\rangle((1\o h^{i})\cdot m)_{[0]}\o h_{i}h_{2}
\end{align*}
Evaluating the right side on $id \o f$, we have
\begin{align*}
 &\langle(\varepsilon\o id)((1\o f_1)\cdot m)_{[-1]},h_1\rangle ((1\o f_1)\cdot m)_{[0]}f_2(h_2)\\
 &=\langle(\varepsilon\o id)(((1\o f_1)\cdot m)_{[-1]}(1\o f_2)),h\rangle ((1\o f_1)\cdot m)_{[0]}.
\end{align*}
Hence $(m\cdot h_2)_{<0>}\o h_1(m\cdot h_2)_{<1>}=m_{<0>}\cdot h_1\o m_{<1>}h_2$ since $f$ was arbitrary.

We now prove (\ref{1d}):
\begin{align*}
&(m\cdot h)_{(-1)}\o(m\cdot h)_{(0)}\\
&=\langle(\varepsilon\o id)m_{[-1]},h\rangle(id\o\varepsilon^*)(m_{[0][-1]})\o m_{[0][0]}\\
&=\langle(\varepsilon\o id)m_{[-1]1},h\rangle(id\o\varepsilon^*)(m_{[-1]2})\o m_{[0]}\\
&=(id\o h)m_{[-1]}\o m_{[0]}\\
&=\langle(\varepsilon\o id)m_{[-1]2},h\rangle(id\o\varepsilon^*)(m_{[-1]1})\o m_{[0]}\\
&=m_{(-1)}\o m_{(0)}\cdot h,
\end{align*}
where in the third equality, $(id\o h)m_{[-1]}$ means the second factor of $m_{[-1]}$ acts on $h$.

Therefore $M\in\mathcal{LR}(H)$. It is straightforward to verify that any morphism in $^{H\o H^*}_{H\o H^*}\mathcal{YD}$ is also a morphism in $\mathcal{LR}(H)$. The proof is completed.
\end{proof}

\begin{theorem}
Let $H$ be a finite dimensional bialgebra. Then we have a monoidal category isomorphism
$$\mathcal{LR}(H)\cong\!^{H\o H^*}_{H\o H^*}\mathcal{YD}.$$
Moreover if $H$ is a Hopf algebra with bijective antipode $S$, they are isomorphic as braided monoidal categories. Consequently
$$\mathcal{LR}(H)\cong\!_{D(H\o H^*)}\mathcal{M},$$
where $D(H\o H^*)$ is the Drinfeld double of $H\o H^*$.
\end{theorem}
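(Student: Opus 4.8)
The plan is to derive the theorem from Lemmas 2.1 and 2.2. First I would show that the functors $F$ and $G$ constructed there are mutually inverse; then that the resulting isomorphism of categories is monoidal; then, under the Hopf hypothesis, that it is braided; and finally invoke the classical identification of the Yetter--Drinfeld category over a finite-dimensional Hopf algebra with the category of modules over its Drinfeld double. For the first step, since $F$ and $G$ are the identity on underlying vector spaces and on morphisms, it suffices to check that running around either loop $G\circ F$ or $F\circ G$ returns the original structure maps. Starting from $M\in\mathcal{LR}(H)$ and applying $F$ then $G$, the counit axioms of the four $H$-(co)module structures together with the dual-basis identities $\sum_i\langle h^i,x\rangle h_i=x$, $\sum_i\langle f,h_i\rangle h^i=f$ and $\sum_i h^i(1)h_i=1$ give
$$(h\otimes\varepsilon)\cdot m=\langle\varepsilon,m_{<1>}\rangle\,h\cdot m_{<0>}=h\cdot m,\qquad m\cdot h=\sum_i\langle h^i,h\rangle\,\varepsilon(m_{(-1)})\,(m_{(0)}\cdot h_i)=m\cdot h,$$
$$(id\otimes\varepsilon^*)\!\left(\sum_i m_{(-1)}\otimes h^i\right)\!\otimes\!\left(m_{(0)}\cdot h_i\right)=m_{(-1)}\otimes m_{(0)},\qquad \sum_i\bigl((1\otimes h^i)\cdot m\bigr)\otimes h_i=m_{<0>}\otimes m_{<1>},$$
so all four $\mathcal{LR}(H)$-structures come back; the reverse composite $F\circ G$ is handled by the symmetric computation, using in addition coassociativity and the counit of the $H\otimes H^*$-comodule. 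This makes $F$ and $G$ inverse isomorphisms of categories.

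Next I would treat the monoidal structure. Both categories are strict monoidal with unit object $k$ equipped with the trivial structures, so it is enough to verify that $F$ is strict monoidal with the identity as coherence morphisms, i.e. that the $\mathcal{LR}(H)$-structure on $M\otimes N$ is carried by $F$ exactly to the tensor-product Yetter--Drinfeld structure on $F(M)\otimes F(N)$ over $H\otimes H^*$. For the action this is the computation
$$(h\otimes f)\cdot(m\otimes n)=\langle f,m_{<1>}n_{<1>}\rangle\,(h_1\cdot m_{<0>})\otimes(h_2\cdot n_{<0>})=\bigl((h_1\otimes f_1)\cdot m\bigr)\otimes\bigl((h_2\otimes f_2)\cdot n\bigr),$$
using $\langle f,xy\rangle=\langle f_1,x\rangle\langle f_2,y\rangle$ and the tensor coproduct $h\otimes f\mapsto(h_1\otimes f_1)\otimes(h_2\otimes f_2)$ of $H\otimes H^*$; the coaction is matched the same way, now invoking in addition the compatibility $\sum_i h^i\otimes\Delta(h_i)=\sum_{j,k}h^jh^k\otimes h_j\otimes h_k$ of the dual bases with multiplication and comultiplication. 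Hence $F$ is a strict monoidal isomorphism, which settles the monoidal part.

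Finally, if $H$ is a Hopf algebra with bijective antipode $S$, then $H^*$ is a Hopf algebra with bijective antipode $S^*$, so $H\otimes H^*$ is a Hopf algebra with bijective antipode $S\otimes S^*$ and ${}^{H\otimes H^*}_{H\otimes H^*}\mathcal{YD}$ is braided. Using $m_{[-1]}\otimes m_{[0]}=\sum_i m_{(-1)}\otimes h^i\otimes(m_{(0)}\cdot h_i)$ together with $(m_{(-1)}\otimes h^i)\cdot n=\langle h^i,n_{<1>}\rangle\,m_{(-1)}\cdot n_{<0>}$, the Yetter--Drinfeld braiding unwinds to
$$m_{[-1]}\cdot n\otimes m_{[0]}=\sum_i\langle h^i,n_{<1>}\rangle\,(m_{(-1)}\cdot n_{<0>})\otimes(m_{(0)}\cdot h_i)=m_{(-1)}\cdot n_{<0>}\otimes m_{(0)}\cdot n_{<1>}=c_{M,N}(m\otimes n),$$
the middle equality using $\sum_i\langle h^i,n_{<1>}\rangle h_i=n_{<1>}$; the inverse braidings agree by the same dual-basis manipulation, now transporting $S^{-1}$ and $(S^*)^{-1}=(S^{-1})^*$ across the evaluation pairing. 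Thus the monoidal isomorphism is braided. Since $H$ is finite dimensional so is $H\otimes H^*$, and composing with the standard isomorphism ${}^{K}_{K}\mathcal{YD}\cong{}_{D(K)}\mathcal{M}$ of braided monoidal categories for a finite-dimensional Hopf algebra $K$ (applied to $K=H\otimes H^*$) yields $\mathcal{LR}(H)\cong{}_{D(H\otimes H^*)}\mathcal{M}$. I do not expect any conceptual obstacle, since the substance is already in Lemmas 2.1--2.2; the step needing the most care is the braiding comparison, where one must keep the dual-basis sum $\sum_i$ strictly separate from the Sweedler-type summations concealed in $m_{(-1)}\otimes m_{(0)}$ and $n_{<0>}\otimes n_{<1>}$, and must make sure that bijectivity of $S$ (equivalently, of the antipode of $H\otimes H^*$) is genuinely used so that the inverse braiding is available on the Yetter--Drinfeld side; finite-dimensionality of $H$ is indispensable both for $H^*$ to be a bialgebra and for forming $D(H\otimes H^*)$.
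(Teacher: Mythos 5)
Your proposal is correct and follows essentially the same route as the paper: it rests on Lemmas 2.1--2.2, verifies that $F$ and $G$ are mutually inverse and strict monoidal, matches the braiding via the same dual-basis computation $m_{[-1]}\cdot n\otimes m_{[0]}=m_{(-1)}\cdot n_{<0>}\otimes m_{(0)}\cdot n_{<1>}$, and then invokes the standard identification of ${}^{K}_{K}\mathcal{YD}$ with ${}_{D(K)}\mathcal{M}$ for $K=H\otimes H^*$. The only difference is that you spell out details (inverse functors, monoidality, the inverse braiding) that the paper leaves as routine.
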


\begin{proof}
It is easy to see that the functor $F:\mathcal{LR}(H)\longrightarrow\!^{H\o H^*}_{H\o H^*}\mathcal{YD}$ is monoidal and that $F\circ G=id$ and $G\circ F=id$. And for all $M,N\in\mathcal{LR}(H)$, and $m\in M,n\in N$,
\begin{align*}
m_{[-1]}\cdot n\o m_{[0]}&\stackrel{(2.2)}{=}\sum (m_{(-1)}\o h^i)\cdot n\o m_{(0)}\cdot h_i\\
                         &\stackrel{(2.1)}{=}\sum m_{(-1)}\cdot n_{<0>}\o m_{(0)}\cdot n_{<1>}.
\end{align*}
The proof is completed.
\end{proof}

\begin{corollary}
$(A,H)$ is an L-R-admissible pair if and only if $(A,H\o H^*)$ is an admissible pair.
\end{corollary}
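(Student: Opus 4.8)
The plan is to read the corollary off directly from the monoidal isomorphism $\mathcal{LR}(H)\cong{}^{H\o H^*}_{H\o H^*}\mathcal{YD}$ proved above, using that ``bialgebra object'' is a notion formulated entirely in terms of the tensor product, the unit object and the braiding of a (pre-)braided monoidal category, hence preserved and reflected by any strict braided monoidal isomorphism which is the identity on underlying objects and on morphisms.

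The first step is to put the two notions into this categorical shape. Unwinding the definition of the $L$-$R$ smash biproduct of \cite{PO,PV}, a pair $(A,H)$ in which $H$ is a bialgebra and $A$ is at once an algebra and a coalgebra is $L$-$R$-admissible precisely when $A\in\mathcal{LR}(H)$, the multiplication $\nabla_A\colon A\o A\to A$ and unit $1_A\colon k\to A$ are morphisms in $\mathcal{LR}(H)$, and the comultiplication $\Delta_A\colon A\to A\o A$ and counit $\varepsilon_A\colon A\to k$ are algebra morphisms in $\mathcal{LR}(H)$, where $A\o A$ carries the braided tensor-product algebra structure built from $c_{A,A}(a\o b)=a_{(-1)}\cdot b_{<0>}\o a_{(0)}\cdot b_{<1>}$; that is, $(A,H)$ is $L$-$R$-admissible iff $A$ is a bialgebra object in $\mathcal{LR}(H)$. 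In exactly the same way (this being the classical Radford biproduct datum), $(A,H\o H^*)$ is an admissible pair iff $A$ is a bialgebra object in ${}^{H\o H^*}_{H\o H^*}\mathcal{YD}$.

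Now I would feed in the Theorem above. The functor $F$ is an isomorphism of monoidal categories which is the identity on underlying vector spaces and on morphisms, and by the closing computation in its proof it carries the braiding $c_{M,N}(m\o n)=m_{(-1)}\cdot n_{<0>}\o m_{(0)}\cdot n_{<1>}$ of $\mathcal{LR}(H)$ onto the braiding $m\o n\mapsto m_{[-1]}\cdot n\o m_{[0]}$ of ${}^{H\o H^*}_{H\o H^*}\mathcal{YD}$; moreover the two categories share the unit object $k$ with trivial structure. Consequently, for a fixed object $A$ carrying fixed maps $\nabla_A$, $1_A$, $\Delta_A$, $\varepsilon_A$, the braided tensor-product algebra and coalgebra structures that $\mathcal{LR}(H)$ and ${}^{H\o H^*}_{H\o H^*}\mathcal{YD}$ put on $A\o A$ agree, and the commuting diagrams that express the bialgebra axioms are literally the same diagrams of linear maps on the two sides. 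Hence $A$ is a bialgebra object in $\mathcal{LR}(H)$ if and only if it is a bialgebra object in ${}^{H\o H^*}_{H\o H^*}\mathcal{YD}$ --- equivalently, $(A,H)$ is $L$-$R$-admissible iff $(A,H\o H^*)$ is admissible --- and applying $G$ provides the reverse passage if one prefers to argue the two implications separately.

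The only genuine content --- and hence the main, if minor, obstacle --- is the first step: lining up the explicit list of axioms that \cite{PO} uses to define an ($L$-$R$-)admissible pair with the abstract ``bialgebra in the category'' description, in particular checking that the monoidal unit is $k$ with the trivial $\mathcal{LR}(H)$-structure so that $1_A$ and $\varepsilon_A$ transfer correctly and that no condition in that list escapes the monoidal and braided structure. Once this alignment is in place there is nothing further to prove. Alternatively one may bypass the categorical language and simply check, one identity at a time, that each compatibility axiom of an $L$-$R$-admissible pair becomes the corresponding axiom for an admissible pair under the explicit dictionary $h\cdot m=(h\o\varepsilon)\cdot m$, $m\cdot h=\langle(\varepsilon\o id)\,m_{[-1]},h\rangle\,m_{[0]}$, $m_{(-1)}\o m_{(0)}=(id\o\varepsilon^{*})(m_{[-1]})\o m_{[0]}$, $m_{<0>}\o m_{<1>}=\sum(1\o h^{i})\cdot m\o h_{i}$ furnished by the two Lemmas above; this is routine and introduces no new idea.
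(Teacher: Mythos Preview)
Your proposal is correct and is precisely the argument the paper has in mind: the corollary is stated without proof, as an immediate consequence of Theorem~2.3, and the Introduction already records that an $L$-$R$-admissible pair is exactly a bialgebra in $\mathcal{LR}(H)$, so your reduction to ``bialgebra objects transfer along a (pre-)braided monoidal isomorphism'' is the intended reasoning. Your remark that the only honest work is matching the explicit axiom list of \cite{PO} with the categorical bialgebra axioms is apt, and the paper simply takes that for granted.
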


By the isomorphism in Theorem 2.3, we can obtain the following result in \cite{PS} directly.

\begin{proposition}
Let $H$ be a finite dimensional Hopf algebra. The canonical braiding of $\mathcal{LR}(H)$ is pseudosymmetric if and only if $H$ is commutative and cocommutative.
\end{proposition}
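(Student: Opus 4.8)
The plan is to reduce the statement to the known characterisation of pseudosymmetric Yetter--Drinfeld braidings and then to a purely Hopf-algebraic observation about the bialgebra $H\o H^*$. Since $H$ is a finite dimensional Hopf algebra, its antipode $S$ is automatically bijective, so Theorem 2.3 applies and gives an isomorphism of braided monoidal categories $F:\mathcal{LR}(H)\cong\ ^{H\o H^*}_{H\o H^*}\mathcal{YD}$. Any braided monoidal isomorphism carries a pseudosymmetric braiding to a pseudosymmetric braiding, because the pseudosymmetry axiom is a commutative diagram built solely from the braiding morphisms and the monoidal structure, both of which $F$ preserves (the intertwining of the two braidings is precisely the last displayed computation in the proof of Theorem 2.3). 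Hence the canonical braiding of $\mathcal{LR}(H)$ is pseudosymmetric if and only if the canonical braiding of $^{H\o H^*}_{H\o H^*}\mathcal{YD}$ is.

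Next I would invoke the result of \cite{PS}: for a finite dimensional Hopf algebra $K$, the canonical braiding of $^{K}_{K}\mathcal{YD}$ is pseudosymmetric if and only if $K$ is both commutative and cocommutative. Applying this with $K=H\o H^*$, which is again a finite dimensional Hopf algebra, the question reduces to: when is $H\o H^*$ both commutative and cocommutative?

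Finally I would carry out the elementary reduction. The tensor product algebra $H\o H^*$ is commutative if and only if both $H$ and $H^*$ are commutative, and $H^*$ is commutative precisely when $H$ is cocommutative. Dually, the tensor product coalgebra $H\o H^*$ is cocommutative if and only if both $H$ and $H^*$ are cocommutative, and $H^*$ is cocommutative precisely when $H$ is commutative. Combining, $H\o H^*$ is simultaneously commutative and cocommutative if and only if $H$ is simultaneously commutative and cocommutative. Chaining this with the two previous steps yields the proposition.

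As for the main difficulty: there is no serious obstacle once Theorem 2.3 is available, since the argument is essentially a translation through the braided isomorphism. The only points needing care are to make sure the characterisation cited from \cite{PS} is stated in a form applicable to $K=H\o H^*$ (finite dimensionality and the Hopf condition are used there), and to confirm that $F$ is genuinely braided, not merely monoidal — which is already settled in the proof of Theorem 2.3.
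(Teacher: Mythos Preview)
Your argument is correct and follows exactly the paper's route: transfer the question to $^{H\o H^*}_{H\o H^*}\mathcal{YD}$ via the braided isomorphism of Theorem~2.3, invoke the known characterisation of pseudosymmetric Yetter--Drinfeld braidings, and then reduce commutativity and cocommutativity of $H\o H^*$ to that of $H$. The only slip is bibliographic: the characterisation you need is in \cite{PSO}, not \cite{PS} (the latter is where the proposition itself first appears).
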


\begin{proof}
From \cite{PSO}, the canonical braiding of $^{H\o H^*}_{H\o H^*}\mathcal{YD}$ is pseudosymmetric if and only if $H\o H^*$ is commutative and cocommutative. By the bialgebra structure of $H\o H^*$, the proof is completed.

\end{proof}

\section*{Acknowledgements}

This work was supported by the NSF of China (No. 11371088) and the Fundamental Research Funds for the Central Universities (No. KYLX15\_0109).

\end{document}